\documentclass[12pt]{amsart}
\pdfoutput=1
\usepackage{amssymb, amsmath, amsthm}
\RequirePackage{graphicx}
\usepackage{mathrsfs}

\setlength{\textheight}{8in} \setlength{\oddsidemargin}{0.0in}
\setlength{\evensidemargin}{0.0in} \setlength{\textwidth}{6.4in}
\setlength{\topmargin}{0.18in} \setlength{\headheight}{0.18in}
\setlength{\marginparwidth}{1.0in} 
\setlength{\abovedisplayskip}{0.2in}

\setlength{\belowdisplayskip}{0.2in}

\setlength{\parskip}{0.05in}
\pagestyle{headings}

\newcommand{\supp}{\operatorname{supp}}

\newtheorem{theorem}{Theorem}
\newtheorem{lemma}[theorem]{Lemma}
\newtheorem{proposition}[theorem]{Proposition}

\newcommand{\newsection}[1] {\section{#1}\setcounter{theorem}{0}
\setcounter{equation}{0}\par\noindent}

\newcommand{\R}{{\mathbb R}}

\newcommand{\g}{{\rm g}}

\newcommand{\C}{{\mathcal C}}
\newcommand{\hf}{\frac 12}

\begin{document}

\title[Bounds on quasimodes for semiclassical Schr\"odinger operators]
{Pointwise bounds on quasimodes of semiclassical Schr\"odinger
  operators in dimension two }
\thanks{HS is supported in part by NSF grant
  DMS-1161283 and MZ is supported in part by NSF grant DMS-1201417.}
\author{Hart F. Smith and Maciej Zworski}
\address{Department of Mathematics, University of Washington, Seattle, WA 98195}
\email{hart@math.washington.edu}
\address{Department of Mathematics, University of California, Berkeley, CA 94720}
\email{zworski@math.berkeley.edu}

\begin{abstract}
We prove optimal pointwise bounds on quasimodes of semiclassical
Schr\"odinger operators with arbitrary smooth real potentials in
dimension two. This end-point estimate was left open in the
general study of semiclassical $ L^p $ bounds conducted by
Koch-Tataru-Zworski \cite{KTZ}. However, we show that 
the results of \cite{KTZ} imply the two dimensional end-point estimate
by scaling and localization.
\end{abstract}

\maketitle 

\newsection{Introduction}

Let $\g_{ij}(x)$ be a positive definite Riemannian metric on $\R^2$
with the corresponding Laplace-Beltrami operator, 
\[  \Delta_\g u :=  \frac 1 { \sqrt {\bar \g} } \sum_{ i,j} \partial_{x_j
}\left(  \g^{ij}  \sqrt {\bar \g} \, \partial_{x_j} u \right) , \ \  (
\g^{ij} ) := ( \g_{ij} )^{-1} , \ \ \bar \g := \det ( \g_{ij} ) , \]
and let $V \in C^\infty ( \R^2  ) $ be real valued.  We prove the 
following general bound which was already established (under an
additional necessary condition) in higher dimensions in \cite{KTZ}, but
which was open in dimension two: 

\medskip
\noindent
{\bf Theorem.} 
{\em Suppose that $h\le 1$, and that $u\in H^2_c(\R^2)$ with $\text{supp}(u) \subseteq K \Subset \R^2$.
Suppose that $u$ satisfies
\begin{equation}\label{eqn:semiclass}
\bigl\| - h^2\Delta_\g u+Vu\|_{L^2}\le h\,,\qquad \|u\|_{L^2}\le 1\,.
\end{equation}
Then
\begin{equation}\label{est:semiclass}
\|u\|_{L^\infty}\le C\,h^{-\frac 12}\,,
\end{equation}
where the constant $C$ depends only on $\g$, $V$, and $K$.
}

\medskip

A function $ u $ satisfying \eqref{eqn:semiclass} is sometimes called
a weak quasimode. It is a local object in the sense that if 
$ u $ is a weak quasimode then $ \psi u $, $ \psi \in C_{c}^\infty (
\R^2 )  $ is also one. The localization is also valid in phase
space: for instance if  $ \chi \in C_c^\infty ( \R^2 \times \R^2 ) $
then  $ \chi^w ( x, h D) u $ is also a weak quasimode -- 
see \cite[Chapter 7]{DiSj} or \cite[Chapter 4]{e-z} for the 
review of the Weyl quantization $ \chi \mapsto \chi^w $.

If $  \liminf_{|x| \to \infty } V > 0 $, then $ - h^2 \Delta + V $
(defined on $ C_c ^\infty ( \R^2 ) $) is 
essentially self-adjoint and the spectrum of
$ - h^2 \Delta + V $ is discrete in a neighbourhood of $ 0$ --
see for instance \cite[Chapter 4]{DiSj}.
In this case weak quasimodes arise as {\em spectral clusters}:
\begin{equation}
\label{eq:specc}   w  = \sum_{ | E_j | \leq C h } c_j w_j , \ \  ( - h^2 \Delta + V
) w_j = E_j w_j ,  \ \ \langle w_j , w_k \rangle_{L^2} = \delta_{jk} ,
\ \ \sum_{ j} |c_j|^2 \leq 1 .\end{equation}
Then $ u = \chi w $, for any $ \chi \in C_c^\infty ( \R^2 ) $, 
is a weak quasimode in the sense of \eqref{eqn:semiclass}. 
Since $ V(x ) \geq c > 0 $  for $ |x| \geq R $,  Agmon
estimates (see for instance \cite[Chapter 6]{DiSj}) 
and Sobolev embedding show that $ |u ( x ) | \leq e^{ c/h } $ for
$ |x| \geq R$. Hence we get global bounds
\[  | w ( x ) | \leq C h^{-\frac12} , \ \ x \in  \R^2 .\]
It should be stressed however that 
a weak quasimode is a more general notion than a spectral cluster.

The result also holds when $ \R^2 $ is replaced by 
a two dimensional manifold and, as in the example
above, gives global bounds on spectral clusters 
\eqref{eq:specc} when the manifold is compact. 
If $V<0$ this 
is also a by-product of the bound of Avakumovic-Levitan-H\"ormander 
on the spectral function -- see \cite{Sogge}, and for a simple
proof of a semiclassical generalization see \cite[\S 3]{KTZ} or \cite[\S 7.4]{e-z}.

In higher dimensions the theorem requires an additional phase
space localization assumption and is a special case of 
\cite[Theorem 6]{KTZ}: Suppose $ p ( x, \xi ) $ is a function 
on $ \R^n \times \R^n $ satisfying
$ \partial_x^\alpha \partial_\xi^\beta p(x,\xi) = \mathcal O ( \langle \xi \rangle^m) $
 for some $m$. Suppose also that $ K \Subset \R^n \times \R^n $, and that for $ ( x  , \xi )  \in K $
\[   p ( x, \xi ) =  0 , \ \  \partial_\xi p ( x , \xi ) = 0  \
\Longrightarrow \ \partial_\xi^2 p ( x , \xi ) \ \text{ is
  nondegenerate. } \]
Then for $ u( h )  $ such that 
\begin{equation}
\label{eq:uK} 
 u ( h ) = \chi^w ( x, h D)  u +
{\mathcal O}_{\mathscr S}  ( h^\infty ) \, , \ \ \  \supp \chi \subset
K \,, \end{equation} 
we have 
\begin{equation}
\label{eq:KT0}  \| u ( h )  \|_\infty \leq C h^{ - \frac{n-1} 2 } \left( \| u ( h
  ) \|_{ L^2} + \frac 1 h \| p^w ( x , h D ) u \|_{ L^2 } \right) , \
\ n \geq 3.  
\end{equation}
When $ n = 2 $ the bound holds with $  (\log ( 1/h ) / h )^{\frac12} $,
which is optimal in general if $ \partial^2_\xi p $ is not positive
definite -- see \cite[\S 3, \S 6]{KTZ} and \S 3
below for examples.

A small bonus in dimension two is the fact that the frequency
localization condition \eqref{eq:uK} needed for \eqref{eq:KT0} is not 
necessary -- see \eqref{eq:chiD} below.

The proof of the theorem is reduced to a local result presented in
Proposition \ref{theorem2}. That result follows in turn from a rescaling
argument involving several cases, some of
which use the most technically involved result of \cite{KTZ}: if
$ p ( x, \xi ) = \sum_{ i, j} \g^{ij} \xi_i \xi_j  + V ( x )$ and  for
$ ( x, \xi ) \in K $
\[  p ( x, \xi ) = 0,  \ \ \partial_\xi p  ( x, \xi ) = 0 \ 
\Longrightarrow \ d_x p ( x, \xi ) = d V ( x ) \neq 0 , \]
then for $ u ( h) $ satisfying \eqref{eq:uK}, the bound \eqref{eq:KT0} holds
for any $ n \geq 2 $ -- see \cite[Corollary 1]{KTZ} for the original results and
Propositions \ref{proposition1} and \ref{proposition2} for rescaled
versions used in our proof.  We do not know of any simpler way to
obtain \eqref{eqn:semiclass}.

\newsection{Proof of Theorem}
By compactness of $K\supseteq\text{support}(u)$, it suffices to prove uniform $L^\infty$ bounds on $u$ over each
small ball intersecting $K$, where in our case the diameter of the ball can be taken to depend only on $\C^N$ estimates for $\g$ and $V$ over a unit sized neighborhood of $K$, for some large $N$. Without loss of generality we consider a ball centered at the origin in $\R^2$.
Let
$$
B=\{x\in\R^2\,:\,|x|<1\}\,,\qquad B^*=\{x\in\R^2\,:\,|x|<2\}\,.
$$
After a linear change of coordinates, we may assume that
\begin{equation}\label{cond1}
\g^{ij}(0)=\delta^{ij}\,.
\end{equation}
Next, by replacing $V(x)$ by $cV(cx)$ and $\g^{ij}(x)$ by $\g^{ij}(cx)$, for some constant
$c\le 1$ depending on the $\C^2$ norm of $\g$ and $V$ over a unit neighborhood of $K$, 
we may assume that
\begin{equation}\label{cond2}
\sup_{x\in B^*}|V(x)|+|dV(x)|\le 2\,,\quad \sup_{x\in B^*}|d^2 V(x)|+\sum_{i,j=1}^2|d\g^{ij}(x)|\le .01\,.
\end{equation}
This has the effect of multiplying $h$ by a constant in the equation \eqref{eqn:semiclass},
which can be absorbed into the constant $C$ in \eqref{est:semiclass}.

In general, we let
\begin{equation}\label{cond3}
C_N=\sup_{x\in B^*}\sup_{|\alpha|\le N}
\Bigl(\,|\partial^\alpha V(x)|+\sum_{i,j=1}^2|\partial^\alpha\g^{ij}(x)|\,\Bigr)\,,
\end{equation}
and will deduce the main theorem as a corollary of the following 

\begin{proposition}\label{theorem2}
Suppose $h\le 1$, that $\g,V$ satisfy \eqref{cond1} and \eqref{cond2}, and that
$u$ satisfies
\begin{equation}\label{eqn:semiclass'}
\bigl\| -h^2\Delta_\g u+Vu\|_{L^2(B^*)}\le h\,,\qquad \|u\|_{L^2(B^*)}\le 1\,.
\end{equation}
Then
\begin{equation}\label{est:semiclass'}
\|u\|_{L^\infty(B)}\le C\,h^{-\frac 12}\,,
\end{equation}
where the constant $C$ depends only on $C_N$ in \eqref{cond3} for some fixed $N$.
\end{proposition}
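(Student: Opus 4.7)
It suffices to bound $|u(x_0)|$ uniformly for each $x_0 \in \bar B$, and the plan is a scale-adapted rescaling that reduces the problem either to [KTZ, Corollary~1]---the $n=2$ form of \eqref{eq:KT0}, valid under $dV \ne 0$ on the turning set $\{p = \partial_\xi p = 0\}$---or to an elliptic/Sobolev estimate. Since $\partial_\xi p = 2\g^{-1}\xi$ vanishes only at $\xi = 0$, the turning set is $\{V = 0,\ \xi = 0\}$, so the [KTZ] hypothesis can fail only where $V(x_0)$ and $dV(x_0)$ are both small.

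\textbf{The rescaling.} For $x_0 \in \bar B$, set
\[
\delta \,=\, \max\bigl(h^{1/2},\ |V(x_0)|^{1/2},\ |dV(x_0)|\bigr) \,\in\, [h^{1/2},1],
\]
and define $v(y) = \delta\,u(x_0+\delta y)$, $\tilde V(y) = \delta^{-2}V(x_0+\delta y)$, $\tilde \g^{ij}(y) = \g^{ij}(x_0+\delta y)$, $\tilde h = h/\delta^2$. A direct computation yields
\[
\|{-}\tilde h^2 \Delta_{\tilde \g} v + \tilde V v\|_{L^2(B^*)} \,\le\, \tilde h,\qquad \|v\|_{L^2(B^*)} \,\le\, 1.
\]
The Hessian $d^2V$ is invariant under the rescaling while $V(x_0)$, $dV(x_0)$, and $d\g^{ij}$ gain factors $\delta^{-2}$, $\delta^{-1}$, $\delta$ respectively, so the choice of $\delta$ together with \eqref{cond2} forces $(\tilde\g,\tilde V)$ to satisfy \eqref{cond1}--\eqref{cond2} on $B^*$ (after a harmless linear change of coordinates to enforce $\tilde\g^{ij}(0) = \delta^{ij}$).

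\textbf{Regimes.} If $\delta = h^{1/2}$ then $\tilde h = 1$: the equation bounds $v$ in $H^2_{\text{loc}}$, so 2D Sobolev embedding gives $\|v\|_{L^\infty} \le C$ and unscaling yields $|u(x_0)| \le C\delta^{-1} = Ch^{-1/2}$. If $\delta > h^{1/2}$ and $|V(x_0)|^{1/2}$ exceeds $|dV(x_0)|$ by a sufficiently large constant factor, then $|\tilde V(0)| = 1$ with $|d\tilde V(0)| \ll 1$, and by \eqref{cond2} $\tilde V$ keeps a definite sign on $B^*$: the rescaled operator is semiclassically elliptic if $\tilde V(0) > 0$, while the rescaled turning set is empty in $B^*$ if $\tilde V(0) < 0$, so [KTZ, Corollary~1] applies (vacuously in the latter case). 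Otherwise $\delta \sim |dV(x_0)|$ and $|d\tilde V(0)| \sim 1$; since the variation of $d\tilde V$ over $B^*$ is $O(\delta|d^2V|) \le 0.02$, we have $|d\tilde V| \gtrsim 1$ throughout $B^*$ and in particular on $\{\tilde V = 0\}$, and [KTZ, Corollary~1] gives $\|v\|_{L^\infty(B)} \le C\tilde h^{-1/2}$. Either way $|u(x_0)| \le Ch^{-1/2}$. The frequency localization \eqref{eq:uK} required by [KTZ] is supplied by a cutoff $\chi^w(y,\tilde h D_y)$ applied to $v$, with the complement negligible via $H^s$-bootstrap from the equation.

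\textbf{Main obstacle.} The essential technical point is to arrange the case analysis so the rescaled data \emph{uniformly} satisfy the [KTZ] hypothesis ($|d\tilde V| > 0$ on $\{\tilde V = 0\}\cap B^*$) on a ball of fixed $y$-radius. This dictates the specific $\max$ in the choice of $\delta$ and leans essentially on the $C^2$-smallness \eqref{cond2} of $V$, without which rescaling could manufacture new turning points inside $B^*$ where $d\tilde V$ is small. Tracking [KTZ]'s constants through the rescaling, verifying that $\chi^w$ commutes with the rescaled operator up to $L^\infty$-negligible errors, and handling the transitions between regimes via a finite partition, is otherwise standard bookkeeping.
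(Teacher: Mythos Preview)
Your proof is correct and follows essentially the same approach as the paper: rescale around each $x_0$ by the scale $\delta=\max(h^{1/2},|V(x_0)|^{1/2},|dV(x_0)|)$ and then invoke either Sobolev embedding (when $\delta=h^{1/2}$) or \cite[Corollary~1]{KTZ} (when the rescaled potential has definite sign or nonvanishing gradient on $B^*$). The paper packages the same idea into four explicit cases with auxiliary lemmas, and in its Case~4 first locates a nearby zero of $V$ before rescaling rather than applying \cite{KTZ} directly as you do in your third regime, but the content is the same.
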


We start the proof of Proposition \ref{theorem2} by recording the following two propositions, which are consequences of \cite[Corollary 1]{KTZ}. 

\begin{proposition}\label{proposition1}
Suppose that \eqref{cond1}-\eqref{cond2} hold, and that $\frac 12 \le |V(x)|\le 2$ for $|x|\le 2$. If the following holds, and $h\le 1$,
$$
\bigl\| -h^2\Delta_\g u+Vu\|_{L^2(B^*)}\le h\,,\qquad \|u\|_{L^2(B^*)}\le 1\,,
$$
then $\|u\|_{L^\infty(B)}\le C\,h^{-\frac 12}$, where $C$ depends only on $C_N$ in \eqref{cond3} for some fixed $N$.
\end{proposition}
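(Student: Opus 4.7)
\emph{Proof plan.} Since $V$ is continuous on $B^*$ with $|V|\ge\hf$, the sign of $V$ is constant on $B^*$, and I would treat the two cases separately. As a common preliminary, pick $\psi\in C_c^\infty(B^*)$ equal to $1$ on a neighborhood of $\overline B$ and set $v=\psi u$. Pairing the equation with $\overline{\psi^2 u}$ gives an a priori $H^1_h$ bound on $\psi u$, which controls the commutator $[-h^2\Delta_\g,\psi]u$ in $L^2$; the resulting compactly supported $v$ satisfies $\|(-h^2\Delta_\g+V)v\|_{L^2}\le Ch$ and $\|v\|_{L^2}\le 1$, so it suffices to bound $\|v\|_{L^\infty}$.

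\emph{Positive case $V\ge\hf$.} Here $-h^2\Delta_\g+V\ge\hf$ as an operator, so pairing against $v$ yields $\tfrac12\|v\|_{L^2}^2\le\|(-h^2\Delta_\g+V)v\|_{L^2}\|v\|_{L^2}$, whence $\|v\|_{L^2}\le Ch$. The symbol $p(x,\xi)=\sum\g^{ij}\xi_i\xi_j+V(x)$ is uniformly elliptic, so semiclassical elliptic regularity gives $\|v\|_{H^2_h}\le C\|(-h^2\Delta_\g+V)v\|_{L^2}+C\|v\|_{L^2}\le Ch$. The semiclassical Sobolev embedding $\|v\|_{L^\infty}\le Ch^{-1}\|v\|_{H^2_h}$ in dimension two then yields $\|v\|_{L^\infty}\le C$, strictly stronger than claimed.

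\emph{Negative case $V\le-\hf$.} Now $\{p=0\}\subset\{|\xi|_\g^2=-V\ge\hf\}$ is bounded away from $\xi=0$, so on this set $\partial_\xi p=2\sum\g^{ij}\xi_j\ne 0$. Hence the hypothesis of \cite[Corollary~1]{KTZ}---that $p=0$ and $\partial_\xi p=0$ together force $dV\ne 0$---is vacuously satisfied on any compact $K$. To impose the phase-space localization \eqref{eq:uK} I perform a frequency cutoff: choose $R$ so large that $p\ge\hf|\xi|^2$ for $|\xi|\ge R$, pick $\chi\in C_c^\infty(\R^2)$ equal to $1$ on $\{|\xi|\le R\}$ with $\supp\chi\subset\{|\xi|\le 2R\}$, and split $v=\chi^w(hD)v+(I-\chi^w(hD))v$. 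The high-frequency piece is microlocalized where $p$ is elliptic, so the positive-case parametrix argument again gives it an $L^\infty$ bound of size $C$. The low-frequency piece $w:=\chi^w(hD)v$ satisfies \eqref{eq:uK} with $K=\supp v\times\{|\xi|\le 2R\}$, together with $\|w\|_{L^2}\le C$ and, using the $O(h)_{L^2\to L^2}$ commutator $[-h^2\Delta_\g+V,\chi^w(hD)]$, also $\|(-h^2\Delta_\g+V)w\|_{L^2}\le Ch$. Applying \cite[Corollary~1]{KTZ} with $n=2$ gives $\|w\|_{L^\infty}\le Ch^{-1/2}$, which combines with the high-frequency bound to complete the proof.

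The substantive point is recognizing that $|V|\ge\hf$ automatically makes the hypothesis of \cite[Corollary~1]{KTZ} vacuous in both sign cases---trivially in the positive case, where the characteristic set is empty, and through the nonvanishing of $\partial_\xi p$ on $\{p=0\}$ in the negative case. The rest is standard semiclassical pseudodifferential calculus; I expect the only mildly technical step to be arranging the frequency truncation in the negative case so that the commutator error stays $O(h)$.
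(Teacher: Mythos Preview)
Your argument is correct and follows the paper's strategy: localize in space, peel off the high-frequency tail, and apply \cite[Corollary~1]{KTZ} to the remaining compactly microlocalized piece, the hypothesis of that corollary being vacuous here because $|V|\ge\tfrac12$ forces either $\{p=0\}=\emptyset$ (positive case) or $\partial_\xi p\ne 0$ on $\{p=0\}$ (negative case). The paper does not split into sign cases---it simply notes that the conditions of \cite[Corollary~1]{KTZ} are met and handles the high-frequency tail by the explicit Fourier-side estimate \eqref{eq:chiD} (showing $\|(hD)^2(u-\chi(hD)u)\|_{L^2}=O(h)$ and integrating against $(1+|h\xi|^2)^{-1}$) rather than by an abstract parametrix. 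Your separate treatment of $V\ge\tfrac12$ via positivity and global ellipticity is a pleasant shortcut yielding the sharper bound $\|v\|_{L^\infty}\le C$, but is not needed. One small point of precision: in the negative case, ``the positive-case parametrix argument'' cannot be invoked verbatim, since there you also used $\|v\|_{L^2}\le Ch$ coming from the sign of $V$; what you actually need for $(I-\chi^w)v$ is the \emph{microlocal} elliptic estimate $\|(I-\chi^w)v\|_{H^2_h}\le C\|(-h^2\Delta_\g+V)v\|_{L^2}+O(h^\infty)\|v\|_{L^2}\le Ch$, after which semiclassical Sobolev gives the $O(1)$ bound as you claim.
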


\begin{proposition}\label{proposition2}
Suppose that \eqref{cond1}-\eqref{cond2} hold, and that $V(0)=0$ and $|dV(0)|=1$. 
If the following holds, and $h\le 1$,
$$
\bigl\| -h^2\Delta_\g u+Vu\|_{L^2(B^*)}\le h\,,\qquad \|u\|_{L^2(B^*)}\le 1\,,
$$
then $\|u\|_{L^\infty(B)}\le C\,h^{-\frac 12}$, where $C$ depends only on $C_N$ in 
\eqref{cond3} for some fixed $N$.

\end{proposition}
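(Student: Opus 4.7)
The plan is to deduce Proposition \ref{proposition2} directly from \cite[Corollary 1]{KTZ}, which in dimension two yields
\[
\|u\|_\infty \le C h^{-\frac12}\Bigl(\|u\|_{L^2} + h^{-1}\|p^w u\|_{L^2}\Bigr)
\]
for $u$ phase-space localized to a compact $K \Subset \R^2\times\R^2$ on which the nondegeneracy condition $p(x,\xi) = \partial_\xi p(x,\xi) = 0 \Rightarrow d_x p(x,\xi) \neq 0$ holds. The proof has three parts: verifying this condition for $p(x,\xi) = \sum_{i,j}\g^{ij}(x)\xi_i\xi_j + V(x)$ on a suitable $K$, producing a phase-space localized replacement for $u$ via cutoff and semiclassical elliptic parametrix, and handling the elliptic remainder.

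For the nondegeneracy: by \eqref{cond1}-\eqref{cond2}, $(\g^{ij}(x))$ lies within $0.02$ of the identity on $B^*$, so $\partial_\xi p(x,\xi) = 2\g^{ij}(x)\xi_j$ vanishes only at $\xi = 0$. At such a point $p(x,0) = V(x) = 0$, but from $V(0)=0$, $|dV(0)|=1$, and $|d^2V|\le 0.01$ on $B^*$,
\[
|d_x p(x,0)| = |dV(x)| \ge |dV(0)| - 2\cdot 0.01 \ge 0.98 > 0.
\]
Hence the hypothesis of \cite[Corollary 1]{KTZ} holds on $K = \overline{B^*}\times\{|\xi|\le R\}$ for any fixed $R$.

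For the localization: choose $\psi \in C_c^\infty(B^*)$ with $\psi \equiv 1$ near $\overline B$ and set $v = \psi u$. A standard semiclassical Caccioppoli estimate (test the equation against $\psi^2 \bar u$ and integrate by parts, using positivity of $(\g^{ij})$ and $|V|\le 2$) gives $\|h\nabla u\|_{L^2(\supp\psi)} \le C$. Since $[-h^2\Delta_\g,\psi]$ is a first-order semiclassical operator of order $h$,
\[
\|p^w v\|_{L^2} \le \|\psi(-h^2\Delta_\g u + Vu)\|_{L^2} + \|[-h^2\Delta_\g,\psi]u\|_{L^2} \le Ch.
\]
Choosing $R$ large enough that $p$ is semiclassically elliptic on $\supp\psi \times \{|\xi|\ge R\}$, a standard semiclassical parametrix yields $\chi \in C_c^\infty(\R^2\times\R^2)$ with $\supp\chi \subset K$, and a symbol $E$ of order $-2$, such that
\[
v = \chi^w(x,hD)v + E^w(x,hD)\,p^w v + O_{L^2}(h^\infty).
\]

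For the conclusion: composition calculus gives $\|p^w \chi^w v\|_{L^2}\le \|\chi^w p^w v\|_{L^2} + \|[p^w,\chi^w]v\|_{L^2} \le Ch$, so \cite[Corollary 1]{KTZ} applied to $\chi^w v$ yields $\|\chi^w v\|_\infty \le Ch^{-\frac12}$. For the parametrix remainder $w = E^w p^w v$, boundedness of $E^w$ from $L^2$ into the semiclassical Sobolev space of order two gives $\|w\|_{L^2} + \|(hD)^2 w\|_{L^2} \le Ch$; converting to classical norms yields $\|w\|_{H^2} \le Ch^{-1}$, and interpolating with $\|w\|_{L^2}\le Ch$ gives $\|w\|_{H^{1+\epsilon}} \le Ch^{-\epsilon}$ for any $\epsilon>0$, whereupon the two-dimensional Sobolev embedding gives $\|w\|_\infty \le Ch^{-\epsilon}$, which is dominated by $h^{-1/2}$. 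Combining, $\|u\|_{L^\infty(B)} \le \|v\|_\infty \le Ch^{-1/2}$. The main obstacle is the verification of the nondegeneracy hypothesis, where the assumptions $V(0)=0$ and $|dV(0)|=1$ enter in an essential way together with the smallness of $d^2V$ from \eqref{cond2}; the microlocal reduction is standard but the error bookkeeping must be arranged so that every remainder beats $h^{-1/2}$.
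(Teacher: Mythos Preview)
Your argument is correct and follows essentially the same route as the paper: verify that $|dV(x)|\ge 0.98$ on $B^*$ so the hypothesis of \cite[Corollary 1]{KTZ} holds, cut off in space via a Caccioppoli-type gradient bound, and then dispose of the high-frequency tail before applying \cite{KTZ}. The only notable difference is in that last step: the paper uses the Fourier multiplier $\chi(hD)$ and a direct Cauchy--Schwarz estimate on the Fourier side to get $\|u-\chi(hD)u\|_{L^\infty}\le C$, whereas you build an order $-2$ parametrix and interpolate in Sobolev spaces to reach $O(h^{-\epsilon})$; both are sufficient, though the paper's bound is a bit sharper and more elementary.
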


To see that these follow from \cite[Corollary 1]{KTZ}, we first note that
in Proposition \ref{proposition2} above, since $|d^2V|\le .01$, we have $.98\le |dV(x)|\le 1.02$ for 
$|x|\le 2$, so since $\g$ is positive definite the conditions on $\g$ and $V$ in \cite[Corollary 1]{KTZ} are met.
We remark that the condition $V(0)=0$ guarantees the the zero set of $V$ is a nearly-flat curve through the origin, although this is not strictly needed
to apply the results of \cite{KTZ}. That the constants in the estimates of \cite{KTZ} depend only on the above bounds on $\g$ and $V$ follow from their proofs.

Next, the estimates above can be localized, as remarked before, so we may assume that $u$ is compactly supported in $|x|<\frac 32$, after which we may extend $\g$ and $V$ globally without
affecting the application of \cite[Corollary 1]{KTZ}.
Indeed, in both propositions above the
assumptions imply $\|du\|_{L^2(|x|<{3/2})}\lesssim h^{-1}$, so that one may cut off $u$ by a smooth
function which is supported in $|x|<\frac 32$ and equals 1 for $|x|<1$.

Finally, the condition (1.4) of \cite{KTZ} that 
$u-\chi(hD)u={\mathcal O}_{\mathscr S}  ( h^\infty )$ for some
$\chi\in \C_c^\infty$ is not needed for the $L^\infty$ results of that paper to hold in dimension two. To see this, we note that since $|V|<2$ and $|\g^{ij}(x)-\delta_{ij}|\le .02$
on the ball $|x|<2$, then if $u$ is supported in $|x|<\frac 32$ and $\chi(\xi)=1$ for $|\xi|<4$, then
$$
\|(hD)^2(u-\chi(hD)u)\|_{L^2} = {\mathcal O}(h)\,.
$$
This follows by the semiclassical pseudodifferential calculus (see \cite[Theorem 4.29]{e-z}),
since for $ \chi_0 \in C^\infty_c  ( \R^2  ) $ with $\supp \chi_0 \subset B^*$, 
$ \chi_0 ( x )(1 - \chi ( \xi ) ) |\xi|^2/( |\xi|^2 + V( x)  )   \in 
S ( \R^2 \times \R^2 ) $.

Hence, writing $ \hat u ( \xi ) $ for the standard Fourier transform of $u$,
\begin{equation}
\label{eq:chiD}
\begin{split}  \|u-\chi(hD)u\|_{L^\infty} & \le  \frac 1 { (2 \pi)^2} \int_{
    \R^2} | 1 - \chi ( h \xi ) | | \hat u ( \xi ) | \,d \xi  \\
& \leq 
C \int  | h \xi |^2 | 1 - \chi ( h \xi ) | | \hat u ( \xi ) | 
( 1 +|h\xi|^2 )^{-1} \, d \xi  \\
& \leq C \|(hD)^2(u-\chi(hD)u) \|_{L^2} \left( \int_{ \R^2 } ( 1 + | h \xi |^2
)^{-2} \, d \xi \right)^{\frac12} \\ 
& \leq C h \, h^{-1} = C \,,
\end{split}
\end{equation}
an even better estimate than required.

We supplement Propositions \ref{proposition1} and \ref{proposition2} with the following two lemmas.

\begin{lemma}\label{lemma3}
Suppose that \eqref{cond1}-\eqref{cond2} hold, and
that $|V(x)|\le 99\,h$ for $|x|\le 2h^{\frac 12}$.
If the following holds, and $h\le 1$,
$$
\bigl\| -h^2\Delta_\g u+Vu\|_{L^2(|x|<2h^{1/2})}\le h\,,\qquad 
\|u\|_{L^2(|x|<2h^{1/2})}\le 1\,,
$$
then $\|u\|_{L^\infty(|x|<h^{1/2})}\le C\,h^{-\frac 12}$, where $C$ depends only on 
$C_N$ in \eqref{cond3} for some fixed $N$.
\end{lemma}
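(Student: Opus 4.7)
The plan is a rescaling to unit scale: at spatial scale $h^{1/2}$, the hypothesis $|V|\le 99h$ renders the rescaled equation a standard elliptic equation with uniformly bounded coefficients, so no quasimode machinery is needed.

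First I would set $v(y)=u(h^{1/2}y)$, so that the ball $|x|<2h^{1/2}$ corresponds to $|y|<2$. A direct computation using $x=h^{1/2}y$ gives $h^2\Delta_\g u(h^{1/2}y)=h\,\Delta_{\tilde\g}v(y)$, where $\tilde\g^{ij}(y)=\g^{ij}(h^{1/2}y)$; by \eqref{cond1}--\eqref{cond2} one has $|\tilde\g^{ij}(y)-\delta^{ij}|\le .02\,h^{1/2}$ for $|y|\le 2$, so the rescaled metric is uniformly elliptic with Lipschitz coefficients of small norm, uniformly in $h\le 1$. Dividing the rescaled equation by $h$ yields
$$
-\Delta_{\tilde\g}v+\tilde Vv=g,\qquad \tilde V(y):=\frac{V(h^{1/2}y)}{h},\ \ g(y):=\frac{(-h^2\Delta_\g u+Vu)(h^{1/2}y)}{h}.
$$
By hypothesis $|\tilde V|\le 99$ on $|y|\le 2$, and since $L^2$ norms in $\R^2$ rescale by a factor $h^{-1/2}$ under $x=h^{1/2}y$, one gets $\|v\|_{L^2(|y|<2)}\le h^{-1/2}$ and $\|g\|_{L^2(|y|<2)}\le h^{-1/2}$.

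The problem is now purely elliptic at unit scale. The standard interior $H^2$ estimate for the uniformly elliptic second order operator $-\Delta_{\tilde\g}+\tilde V$, with Lipschitz principal part and bounded zeroth order coefficient, gives
$$
\|v\|_{H^2(|y|<3/2)}\le C\bigl(\|v\|_{L^2(|y|<2)}+\|g\|_{L^2(|y|<2)}\bigr)\le Ch^{-\frac 12},
$$
with constant $C$ independent of $h$. Since we are working in dimension two, Sobolev embedding $H^2\hookrightarrow L^\infty$ applied after a smooth cutoff to $|y|<3/2$ produces $\|v\|_{L^\infty(|y|<1)}\le Ch^{-1/2}$, and undoing the change of variables yields $\|u\|_{L^\infty(|x|<h^{1/2})}\le Ch^{-1/2}$.

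There is no serious obstacle here; the content of the lemma is the observation that $|V|\le 99h$ is precisely the threshold at which rescaling by $h^{1/2}$ turns $V/h$ into a uniformly bounded function. Inside this near-zero set of $V$ the quasimode problem degenerates into a bounded-coefficient elliptic equation at unit scale, so the pointwise bound comes from elliptic regularity plus the two-dimensional embedding $H^2\hookrightarrow L^\infty$; no sharper estimate such as those of \cite{KTZ} is needed in this regime.
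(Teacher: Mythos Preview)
Your proof is correct and is essentially the paper's own argument: rescale by $h^{1/2}$ to a unit ball, observe that the rescaled potential is uniformly bounded, and invoke interior elliptic regularity together with the two-dimensional Sobolev embedding $H^2\hookrightarrow L^\infty$. The only cosmetic difference is that the paper sets $\tilde u(x)=h^{1/2}u(h^{1/2}x)$ so that $\|\tilde u\|_{L^2}\le 1$ and absorbs $Vu$ directly into the right-hand side, whereas you carry the factor $h^{-1/2}$ through and keep $\tilde V$ as a bounded zeroth-order coefficient; the conclusions are identical.
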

\begin{proof}
Consider the function $\tilde u(x)=h^{\frac 12} u(h^{\frac 12}x)$, and ${\tilde \g}^{ij}(x)=\g^{ij}(h^{\frac 12}x)$. Then, since $\|Vu\|_{L^2(|x|<2h^{1/2})}\le 99h$, we have
$$
\|\Delta_{\tilde \g}\tilde u\|_{L^2(|x|<2)}\le 100\,,\qquad \|\tilde u\|_{L^2(|x|<2)}\le 1\,.
$$
Since the spatial dimension equals 2, interior Sobolev estimates yield
$\|\tilde u\|_{L^\infty(|x|<1)}\le C$, where we note that the conditions \eqref{cond1} and \eqref{cond2} hold for $\tilde\g$ since $h^{\frac 12}\le 1$.
\end{proof}

\begin{lemma}\label{lemma4}
Suppose that \eqref{cond1}-\eqref{cond2} hold, and
that $\hf c\le |V(x)|\le 2c$ for $|x|\le 2c^{\frac 12}$.
If the following holds, and $h\le c\le 1$,
$$
\bigl\| -h^2\Delta_\g u+Vu\|_{L^2(|x|<2c^{1/2})}\le h\,,\qquad 
\|u\|_{L^2(|x|<2c^{1/2})}\le 1\,,
$$
then $\|u\|_{L^\infty(|x|<c^{1/2})}\le C\,h^{-\frac 12}$, where $C$ depends only $C_N$ in \eqref{cond3} for some fixed $N$.
\end{lemma}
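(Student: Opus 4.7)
The approach is a rescaling that reduces Lemma \ref{lemma4} to Proposition \ref{proposition1}. Guided by the scales $|x|\sim c^{1/2}$ and $|V|\sim c$, I set $y = x/c^{1/2}$ and introduce
\[
\tilde u(y) = c^{1/2}\,u(c^{1/2}y),\qquad \tilde{\g}^{ij}(y) = \g^{ij}(c^{1/2}y),\qquad \hat V(y) = c^{-1}V(c^{1/2}y),\qquad \tilde h = h/c.
\]
The map $u\mapsto\tilde u$ is an $L^2$-isometry, so $\|\tilde u\|_{L^2(|y|<2)}\le 1$. Using the chain-rule identity $(\Delta_\g u)(c^{1/2}y) = c^{-3/2}\Delta_{\tilde{\g}}\tilde u(y)$, the equation $-h^2\Delta_\g u+Vu = f$ transforms, after multiplying by $c^{1/2}/c$, into
\[
-\tilde h^2\,\Delta_{\tilde{\g}}\tilde u + \hat V\,\tilde u = \tilde f,\qquad \|\tilde f\|_{L^2(|y|<2)}\le \tilde h.
\]
The hypotheses $h\le c$ and $\hf c\le |V|\le 2c$ on $|x|<2c^{1/2}$ become $\tilde h\le 1$ and $\hf\le|\hat V|\le 2$ on $|y|<2$, matching Proposition \ref{proposition1} exactly.

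The substantive point is to verify that $(\tilde{\g},\hat V)$ still satisfies \eqref{cond1}-\eqref{cond2} with bounds depending only on $C_N$. Condition \eqref{cond1} is immediate, and the scaling $\partial^\alpha\tilde{\g}(y) = c^{|\alpha|/2}(\partial^\alpha\g)(c^{1/2}y)$ handles all derivatives of $\tilde{\g}$ since $c\le 1$. The scaling $\partial^\alpha\hat V(y) = c^{|\alpha|/2-1}(\partial^\alpha V)(c^{1/2}y)$ is harmless for $|\alpha|\ge 2$, so the only delicate bound is $|d\hat V|\le\text{const}$, equivalently $|dV(x)|\lesssim c^{1/2}$ for $|x|<2c^{1/2}$. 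This bound is forced by the trapping hypothesis: since $V$ lies in an interval of length $3c/2$ on a ball of radius $2c^{1/2}$ and has $|d^2V|\le .01$, Taylor expansion at $0$ gives $|dV(0)|\lesssim c^{1/2}$ (otherwise $V$ would exit the interval), and the bound on $d^2V$ propagates this to the whole ball.

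Applying Proposition \ref{proposition1} to $\tilde u$ then yields $\|\tilde u\|_{L^\infty(|y|<1)}\le C\tilde h^{-1/2} = C(c/h)^{1/2}$. Since $\|\tilde u\|_{L^\infty(|y|<1)} = c^{1/2}\|u\|_{L^\infty(|x|<c^{1/2})}$, one obtains $\|u\|_{L^\infty(|x|<c^{1/2})}\le Ch^{-1/2}$, as claimed. The main obstacle is the Taylor argument giving $|dV|\lesssim c^{1/2}$; apart from this and the associated (harmless) constant bookkeeping---the rescaled data may narrowly miss the precise numerical constants in \eqref{cond2}, but the constants in Proposition \ref{proposition1} depend only on $C_N$---the proof is a routine change of variables.
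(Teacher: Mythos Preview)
Your proof is correct and follows essentially the same approach as the paper: the same rescaling $\tilde u(y)=c^{1/2}u(c^{1/2}y)$, $\tilde h=h/c$, $\hat V=c^{-1}V(c^{1/2}\cdot)$, reducing to Proposition~\ref{proposition1}, with the key observation that the hypotheses force $|dV|\lesssim c^{1/2}$ on the ball. Your explicit Taylor argument for this bound and your remark about the harmless constant bookkeeping are in fact slightly more detailed than the paper's treatment, which simply asserts $|dV(x)|\le c^{1/2}$.
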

\begin{proof}
Let $\tilde u(x)=c^{\frac 12} u(c^{\frac 12}x)$, ${\tilde \g}^{ij}(x)=\g^{ij}(c^{\frac 12}x)$, and 
$\tilde V(x)=c^{-1}V(c^{\frac 12}x)$. Note that the assumptions on $V(x)$ in the statement and in \eqref{cond2} imply that
$|dV(x)|\le c^{\frac 12}$ for $|x|<2 c^{1/2}$, so that $\tilde V$ satisfies \eqref{cond2}, and the constants $C_N$ in \eqref{cond3} can only decrease for $c\le 1$. Then with
$\tilde h=c^{-1}h\le 1$, 
$$
\|-\tilde h^2\Delta_{\tilde\g} \tilde u+\tilde V\tilde u\|_{L^2(|x|<2)}\le \tilde h\,,\qquad \|\tilde u\|_{L^2(|x|<2)}\le 1\,.
$$
By Proposition \ref{proposition1}, we have 
$\|\tilde u\|_{L^\infty(|x|<1)}\le C{\tilde h}^{-\frac 12}$, giving the desired result.
\end{proof}

\begin{proof}[Proof of Proposition \ref{theorem2}] 
It suffices to prove that for each $|x_0|<1$ there is some $\hf\ge r>0$ so that
$\|u\|_{L^\infty(|x-x_0|<r)}\le C\, h^{-\frac 12}$, with a global constant $C$. Without loss of generality we take $x_0=0$.

We will split consideration up into four cases, depending on the relative size of $|V(0)|$ and $|dV(0)|$. Since for $h$ bounded away from $0$ the result follows by elliptic estimates, we will assume $h\le\frac 14$ so that $h^{\hf}$ below is at most $\hf$.

\noindent{\bf Case 1:} $|V(0)|\le h\,,\;|dV(0)|\le 8 h^{\frac 12}$. Since $|d^2V(x)|\le .01$, then Lemma \ref{lemma3} applies to give the result with $r=h^{\frac 12}$.

\medskip

\noindent{\bf Case 2:} $|V(0)|\le h\,,\;|dV(0)|\ge 8h^{\frac 12}$. Since we may add a constant of size $h$ to $V$ without affecting \eqref{eqn:semiclass'}, we may assume $V(0)=0$. By rotating we may then assume 
$$V(x)=\beta x_1+f_{ij}(x)x_i x_j\,,$$
where $\beta=|dV(0)|\ge 8h^{\frac 12}$. Dividing $V$ by 4 if necessary we may assume $\beta\le \hf$.
Let $\tilde u=\beta u(\beta x)$, 
$\tilde\g^{ij}(x)=\g^{ij}(\beta x)$,
and
$$
\tilde V(x)=\beta^{-2}V(\beta x)=x_1+f_{ij}(\beta x)x_i x_j\,.
$$
With $\tilde h=\beta^{-2}h<1$ we have
$$
\|-\tilde h^2\Delta_{\tilde\g}\tilde u +\tilde V\tilde u\|_{L^2(|x|<2)}\le \tilde h\,,
\qquad \|\tilde u\|_{L^2(|x|<2)}\le 1\,.
$$
Proposition \ref{proposition2} applies, since $\tilde g$ and $\tilde V$ satisfy \eqref{cond1}-\eqref{cond2}, and the constants $C_N$ in \eqref{cond3} for $\tilde g$ and $\tilde V$ are bounded by those for $\g$ and $V$.
Thus $\|\tilde u\|_{L^\infty(|x|<1)}\le C\tilde h^{-\frac 12}$, giving
the desired result on $u$ with $r=|dV(0)|$.

\medskip

\noindent{\bf Case 3:} $|V(0)|\ge h\,,\;|dV(0)|\le 9|V(0)|^{\frac 12}$.
In this case, with $c=|V(0)|$, it follows that $\hf c\le |V(x)|\le 2c$ for $|x|\le \frac 1{20} c^{\frac 12}$. We may apply Lemma \ref{lemma4} with $V$ replaced by $\frac 1{1600}V$ to get the desired result with $r=\frac 1{40}|V(0)|^{\frac 12}$.

\medskip

\noindent{\bf Case 4:} $|V(0)|\ge h\,,\;|dV(0)|\ge 9 |V(0)|^{\frac 12}$.
Since $|d^2V(x)|\le .01$, it follows that there is a point $x_0$ with 
$|x_0|\le \frac 18 |V(0)|^{\frac 12}$
where $V(x_0)=0$. Since $|dV(x_0)|\ge 8 |V(0)|^{\frac 12}\ge 8h^{\frac 12}$, we may translate and apply Case 2 to get $L^\infty$ bounds on $u$ over a neighborhood of radius $|dV(x_0)|$ about $x_0$. This neighborhood contains the neighborhood about $0$ of radius 
$r=.9998\,|dV(0)|$.
\end{proof}

\newsection{A counter-example for indefinite $\g$.}
In \cite[Section 5]{KTZ}, it was shown that there exist $u_h$ for which
\begin{equation}\label{KTZexample}
\|-h^2(\partial_{x_1}^2-\partial_{x_2}^2)u_h+(x_1^2-x_2^2)u_h\|_{L^2}\le h\,,\qquad
\|u_h\|_{L^2}\le 1\,,
\end{equation}
for which $\|u_h\|_{L^\infty}\approx |\log h|^\hf h^{-\hf}\,,$ showing that the assumption of
definiteness of $\g$ cannot be relaxed to non-degeneracy in the main
theorem. In \cite[Theorem 6]{KTZ} the positive result was established showing that this growth of $\|u_h\|_{L^\infty}$ for indefinite, non-degenerate $\g$ in two dimensions is in fact worst case.

The example of \cite{KTZ} was produced using harmonic oscillator eigenstates.
Here we present a different construction of such a $u_h$ with similar $L^\infty$ growth to help illustrate the role played by the degeneracy of $\g$. The idea is to produce a collection
$u_{h,j}$ of functions satisfying \eqref{KTZexample} (or equivalent), for which
$u_{h,j}(0)=h^{-\hf}$, and where $j$ runs over $\approx|\log h|$ different values. The examples will have disjoint frequency support, hence are orthogonal in $L^2$. Upon summation over $j$ the $L^2$ norm then grows as $|\log h|^\hf$, whereas the $L^\infty$ norm grows as $|\log h|\,h^{-\hf}$, yielding an example with worst case growth after normalization.

We start by considering the form $\xi_1\xi_2$ with $V=0$.
To assure that $\|h^2\partial_{x_1}\partial_{x_2} u_h\|_{L^2}\le h$, we will take the Fourier transform of $u_h$ to be contained in the set $|\xi_1\xi_2|\le 2h^{-1}$, as well as 
$|\xi|\le 2h^{-1}$ to satisfy the frequency localization condition \cite[(1.4)]{KTZ}. 
Our example is then based on the fact that one can
find $\approx|\log h|$ disjoint rectangles, each of volume $h^{-1}$, within this region, as illustrated in the diagram. Each
$u_{h,j}$ will be an appropriately scaled Schwartz function with Fourier transform localized to one of the rectangles.

\centerline{\includegraphics[width=6truein]{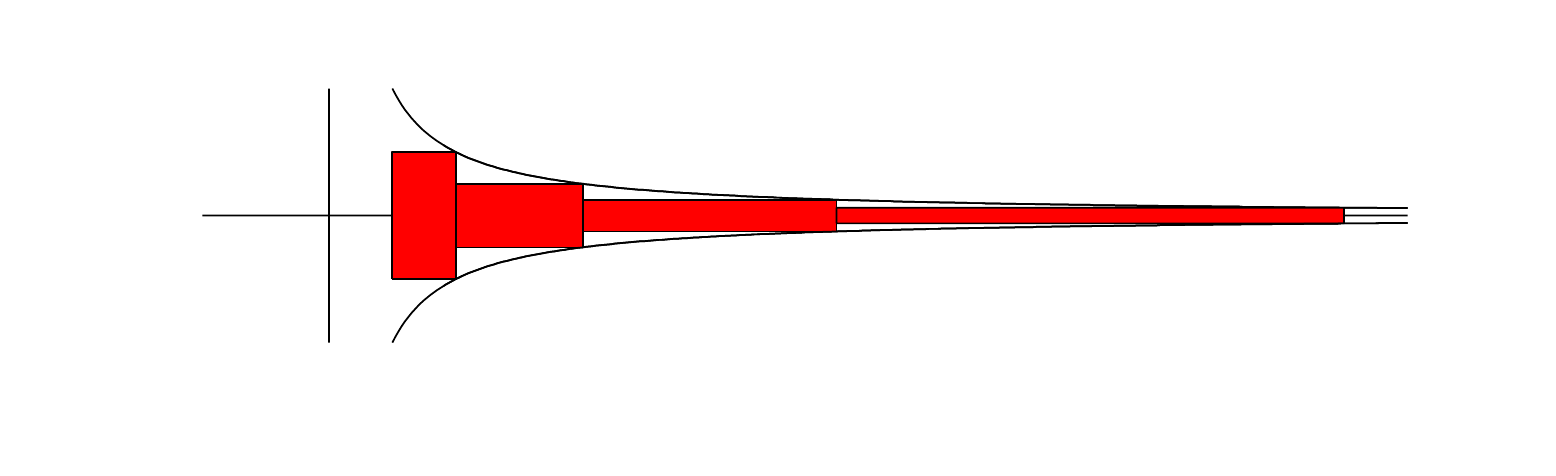}}

We now fix $\psi,\chi\in \C_c^\infty(\R)$, with $0\le \psi(x)\le 2$
and $0\le \chi(x)\le 1$,
with $\int\psi=\int\chi=1$, and where
$$
\text{supp}(\psi)\subset [1,2]\,,\qquad \text{supp}(\chi)\subset [-1,1]\,.
$$
We additionally assume $\chi(0)=1$.

Let
$$
u_{h,j}(x)=h^{\frac 12}\int e^{ix_1\xi_1+ix_2\xi_2}\chi(2^jh\,\xi_1)\psi(2^{-j}\xi_2)
\,d\xi_1\,d\xi_2
=h^{-\frac 12}\check{\chi}(2^{-j}h^{-1}x_1)\check{\psi}(2^jx_2)
\,.
$$
By the Plancherel theorem, $\|u_{h,j}\|_{L^2}\approx 1$ and 
$\|h^2D_1D_2 u_{h,j}\|_{L^2}\lesssim h\,.$
Furthermore, $u_{h,j}(0)=h^{-\frac 12}$. By disjointness of the Fourier transforms, for $i\ne j$ we have
$\langle u_{h,i},u_{h,j}\rangle =0$, and similarly 
$\langle \partial_{x_1}\partial_{x_2} u_{h,i},\partial_{x_1}\partial_{x_2} u_{h,j}\rangle =0$.

We then form 
$$
u_h(x)=|\log h|^{-\frac 12}\sum_{1\le 2^j\le h^{-1}}u_{h,j}(x)\,.
$$
Since there are $\approx|\log h|$ terms in the sum, and the terms are orthogonal in $L^2$, it follows that
$$
\|u_h\|_{L^2}\approx 1\,,\qquad \|h^2 \partial_{x_1}\partial_{x_2}u_h\|_{L^2(\R^2)}\lesssim h\,,\qquad u_h(0)\approx 
|\log h|^{\frac 12}h^{-\frac 12}\,.
$$
Although the example is not compactly supported, it is rapidly decreasing (uniformly so for $h<1$), and one may smoothly cutoff to a bounded set without changing the estimates.

We observe that for this example it also holds that 
$$
\|x_1x_2 u_h\|_{L^2}\lesssim h\,.
$$ 
Hence, $u_h$ is also a counterexample for the form
$\xi_1\xi_2\pm x_1x_2$. Rotating by $\pi/4$ gives the form $\xi_1^2-\xi_2^2\pm(x_1^2-x_2^2)$,
including in particular the form considered in \cite[Section 6]{KTZ}. 

We also observe that $x_1^2u_h$ will be $\mathcal{O}_{L^2}(h)$ if one restricts the sum in $u_h$ to $1\le 2^j\le h^{-\frac 12}$, which still has 
$\approx |\log h|$ values of $j$, and thus exhibits the same $L^\infty$ growth as $u_h$. This idea does not however work to yield a counterexample for the form $\xi_1\xi_2+x_1^2+x_2^2$.

\end{document}